\newenvironment{proofof}[1]{\bigskip\noindent{\it Proof of~#1.}\rm}{\hfill $\Box$}
\newtheorem{theorem}{Theorem}[section]
\newtheorem{lemma}[theorem]{Lemma}
\theoremstyle{definition}
\theoremstyle{remark}
\numberwithin{equation}{section}
\begin{document}
\bibliographystyle{plain}
\title[An Improvement to a Berezin-Li-Yau type inequality]{An Improvement to a Berezin-Li-Yau type inequality for the Klein-Gordon Operator}
\author{Selma Yildirim Yolcu}
\address{Georgia Institute of Technology}
\email{selma@math.gatech.edu}

\begin{abstract}
In this article we improve a lower bound for $\sum_{j=1}^k\beta_j$
(a Berezin-Li-Yau type inequality) in \cite{HarYil}. Here
$\beta_j$ denotes the $j$th
eigenvalue of the Klein Gordon Hamiltonian  $H_{0,\Omega}=|p|$ when restricted to a bounded set
$\Omega\subset {\mathbb R}^n$. $H_{0,\Omega}$ can also be described as the generator of the
Cauchy stochastic process with a killing condition on $\partial
\Omega$. (cf. \cite{BanKul}, \cite{BanKul2}.) To do this, we adapt the proof of Melas (\cite{Melas}),  who
improved the estimate for the bound of $\sum_{j=1}^k\lambda_j$,   where $\lambda_j$ denotes the $j$th eigenvalue of
the Dirichlet Laplacian on a bounded domain in ${\mathbb R}^d$.
\end{abstract}

\maketitle

\section{Introduction}
In this article, we consider the pseudodifferential operator
$H_{0,\Omega}:=\sqrt{-\Delta}$ restricted
to an open bounded set $\Omega$ in ${\mathbb R}^d$. This operator is sometimes called the fractional Laplacian
with power $\frac{1}{2}$. ( cf. \cite{BanKul} and \cite{BanKul2}).
We note that  $H_{0,\Omega}$ is the generator of the Cauchy
stochastic process with a killing condition on $\partial\Omega$(cf. \cite{BanKul}, \cite{BanKul2}.)
Let $\beta_k$ denote the $k$th eigenvalue of $H_{0,\Omega}$ and $u_k$  denote
the corresponding normalized eigenfunction.
Then the eigenvalues $\beta_j$ satisfy
$$0<\beta_1<\beta_2\leq\beta_3\leq\cdots\leq\beta_j\leq
\cdots\;\to\;\infty,$$
where each eigenvalue is repeated according to its multiplicity. Throughout this article $|\Omega|$ denotes the volume of the set $\Omega$.

To show the analogy between the Dirichlet Laplacian and $H_{0,\Omega}$, we first mention similar results for the Dirichlet Laplacian. Let $\lambda_j$ be the eigenvalues of  the Dirichlet Laplacian on  $\Omega$. One such result is the so called Li-Yau inequality proved by P. Li and S.-T. Yau. In \cite{LiYau}, they proved that
\begin{equation}\label{LiYau}
\sum_{j=1}^{k}\lambda_j\geq \frac{dC_d}{d+2}|\Omega|^{-2/d}k^{1+2/d},
\end{equation}
where $C_d=4\pi\Gamma(1+d/2)^{2/d}$.

As mentioned in \cite{LapWei}, \eqref{LiYau}
can be obtained by a Legendre transform of an earlier result by Berezin\cite{Bere}.
Hence, instead of calling Li-Yau inequality, we prefer Berezin-Li-Yau inequality.

A.D. Melas improved the bound in the Berezin-Li-Yau inequality \eqref{LiYau} in \cite{Melas} and
proved that
\begin{equation}\label{melas}
\sum_{j=1}^{k}\lambda_j\geq \frac{dC_d}{d+2}|\Omega|^{-2/d}k^{1+2/d}+M_dk\frac{|\Omega|}{I(\Omega)},
\end{equation}
where the constant $M_d$ depends only on the dimension. Here $I(\Omega)$ is the moment of inertia,
which is defined as $\displaystyle{I(\Omega)=\min_{u\in{\mathbb R}^d}\int_{\Omega}|x-u|^2dx}$.

The improvement of the last inequality \eqref{melas} has recently been studied by many authors,( cf. \cite{KovVugWei}, \cite{Weidl}). More precisely, in \cite{KovVugWei}, H. Kova\v{r}\'{\i}k, S. Vugalter
and T. Weidl improved \eqref{melas} when $d=2$ and assuming geometric properties of the boundary of $\Omega$. Their proof is ingenious but somewhat intricate and they first state and prove their result in the case of polygons, then in the case of general domains. Moreover, their result has
a second term that has the order of $k$ as in the asymptotic behavior of the sum on the left
hand side of \eqref{LiYau}:
\begin{equation}\label{asymLiYau}
\sum_{j=1}^{k}\lambda_j=\frac{dC_d}{d+2}|\Omega|^{-2/d}k^{1+2/d}
+\tilde{C}_d\frac{|\partial\Omega|}{|\Omega|^{1+1/d}}k^{1+1/d}+o(k^{1+1/d})\;\;\mbox{as}\;k\to\infty.
\end{equation}

As stated in \cite{KovVugWei}, the correction term in \eqref{melas} is of larger
order than $k$, which appear in the asymptotics of \eqref{LiYau}.

Let's define the Riesz mean of order $\sigma$ as
$$R_{\sigma}(z)=\sum_{j}(z-\lambda_j)^{\sigma}_{+}.$$

Another analogous result is given in \cite{Weidl}, where T. Weidl found
a Berezin type bound for the Riesz mean $R_{\sigma}(z)$ when $\sigma>3/2$. The
second term in this bound is similar to the second term
in the asymptotics of $R_{\sigma}(z)$, up to a constant. His method utilizes sharp Lieb-Thirring inequalities for operator valued potentials.

A natural question is how this approach can be adapted to the case of Klein-Gordon operators.
This article answers this question and  improves the Berezin-Li-Yau type bound in \cite{HarYil}.
We follow the basic strategy of \cite{Melas}, with some important differences of detail. \\
We first state the analogue of the Weyl asymptotic formula and
the Berezin-Li-Yau type inequality in the case of Klein Gordon operators $H_{0,\Omega}$. In
\cite{HarYil}, E. Harrell and the author proved the following asymptotic formula:
\begin{theorem} (Analogue of the Weyl asymptotic formula) As $k\to\infty$,
\begin{equation}\label{weylasym}
\beta_k\sim \tilde{C_d}|\Omega|^{-1/d}k^{1/d},
\end{equation}
where $\tilde{C_d}=\sqrt{4\pi}\,\Gamma(1+d/2)^{1/d}$.
\end{theorem}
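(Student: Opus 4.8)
The plan is to route through the eigenvalue counting function
$N(\beta):=\#\{j:\beta_j\le\beta\}$ and recover $\beta_k$ by inversion: if one proves the one‑term Weyl law $N(\beta)\sim A\beta^{d}$ as $\beta\to\infty$ for a constant $A>0$, then the monotone rearrangement gives $\beta_k\sim (k/A)^{1/d}$, so everything reduces to computing $A$. Heuristically $A=(2\pi)^{-d}\,|\Omega|\cdot|\{\xi:|\xi|\le 1\}|=(2\pi)^{-d}|\Omega|\,\omega_d$ with $\omega_d=\pi^{d/2}/\Gamma(1+d/2)$, and a short Gamma‑function check shows $(1/A)^{1/d}|\Omega|^{1/d}=\bigl(2^{d}\pi^{d/2}\Gamma(1+\tfrac d2)\bigr)^{1/d}=\sqrt{4\pi}\,\Gamma(1+\tfrac d2)^{1/d}=\tilde{C_d}$; so the content is the Weyl law itself.

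To establish $N(\beta)\sim A\beta^{d}$ I would use a heat‑trace Tauberian argument rather than Dirichlet--Neumann bracketing, since $H_{0,\Omega}$ is nonlocal and bracketing by cubes is not transparent for it. Let $p_t^{\Omega}(x,y)$ be the transition density of the Cauchy process killed on exiting $\Omega$, so that $\theta(t):=\operatorname{Tr}\!\bigl(e^{-tH_{0,\Omega}}\bigr)=\sum_j e^{-t\beta_j}=\int_{\Omega}p_t^{\Omega}(x,x)\,dx$. The free Cauchy (Poisson) kernel is explicit, $p_t(x)=c_d\,t\,(t^{2}+|x|^{2})^{-(d+1)/2}$ with $c_d=\Gamma(\tfrac{d+1}{2})\pi^{-(d+1)/2}$, so $p_t(0)=c_d\,t^{-d}$. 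By monotonicity of killing, $p_t^{\Omega}(x,x)\le p_t(0)=c_d t^{-d}$. Conversely, the Dynkin--Hunt formula $p_t^{\Omega}(x,x)=p_t(0)-\mathbb{E}_x\!\bigl[p_{t-\tau_{\Omega}}(X_{\tau_{\Omega}}-x);\,\tau_{\Omega}<t\bigr]$, combined with the crude bound $p_s(z)\le c_d\,s\,|z|^{-(d+1)}\le c_d\,t\,\delta^{-(d+1)}$ valid for $s\le t$ and $|z|\ge\delta$, shows that for any interior point $x$ with $\delta:=\operatorname{dist}(x,\partial\Omega)>0$ (hence a.e.\ $x\in\Omega$, since $X_{\tau_\Omega}\in\Omega^{c}$) one has $t^{d}p_t^{\Omega}(x,x)\ge c_d-c_d\,t^{d+1}\delta^{-(d+1)}\to c_d$. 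Thus $t^{d}p_t^{\Omega}(x,x)\to c_d$ pointwise a.e.\ on $\Omega$, and since $t^{d}p_t^{\Omega}(x,x)\le c_d\mathbf 1_{\Omega}(x)\in L^{1}$, dominated convergence yields $\theta(t)\sim c_d|\Omega|\,t^{-d}$ as $t\to 0^{+}$.

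The last step is Karamata's Tauberian theorem: from $\int_0^{\infty}e^{-t\beta}\,dN(\beta)=\theta(t)\sim c_d|\Omega|\,t^{-d}$ one gets $N(\beta)\sim \dfrac{c_d|\Omega|}{\Gamma(d+1)}\beta^{d}$, i.e.\ $A=c_d|\Omega|/\Gamma(d+1)$, so $\beta_k\sim\bigl(\Gamma(d+1)/c_d\bigr)^{1/d}|\Omega|^{-1/d}k^{1/d}$. Legendre's duplication formula $\Gamma(d+1)=2^{d}\pi^{-1/2}\Gamma(\tfrac{d+1}{2})\Gamma(1+\tfrac d2)$ gives $\Gamma(d+1)/c_d=2^{d}\pi^{d/2}\Gamma(1+\tfrac d2)$, hence $\beta_k\sim\sqrt{4\pi}\,\Gamma(1+\tfrac d2)^{1/d}|\Omega|^{-1/d}k^{1/d}=\tilde{C_d}|\Omega|^{-1/d}k^{1/d}$, as claimed.

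I expect the main obstacle to be exactly the interior heat‑kernel estimate underlying the dominated‑convergence step: showing that paths which have left $\Omega$ before time $t$ contribute only $o(t^{-d})$ to $\theta(t)$. This hinges on the explicit Cauchy kernel and on the fact that the off‑diagonal decay $p_s(z)\lesssim s|z|^{-(d+1)}$ is only polynomial (heavy tails), so the argument genuinely needs $s\le t$ together with a positive distance $\delta$ to $\partial\Omega$; it also delivers only the leading term, not a remainder estimate, which is all the statement asks. A cheaper but weaker alternative, giving just $\limsup_k k^{-1/d}\beta_k\le\tilde{C_d}|\Omega|^{-1/d}$, is the operator comparison $\beta_j\le\sqrt{\lambda_j}$ (killing after subordination dominates killing before) together with Weyl's law for the Dirichlet Laplacian; pairing this with the Berezin--Li--Yau type lower bound of \cite{HarYil} and an averaging argument would be more self‑contained but less sharp than the Tauberian route above.
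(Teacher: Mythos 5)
Your proposal is correct, and it supplies an actual argument where the paper gives none: the text merely remarks that the theorem ``can be proved by adapting a proof of the Weyl asymptotic formula for the Laplacian'' and points to \cite{HarYil}, so there is no in-paper proof to match step by step. Your route is the classical heat-trace/Tauberian one for stable processes (going back to Blumenthal--Getoor, and used in the Cauchy-process literature, cf. \cite{BanKul2}): the Dynkin--Hunt comparison with the explicit Poisson kernel gives $t^{d}p_t^{\Omega}(x,x)\to c_d$ for every $x\in\Omega$ with the uniform bound $c_d\mathbf{1}_{\Omega}$, dominated convergence gives $\operatorname{Tr}(e^{-tH_{0,\Omega}})\sim c_d|\Omega|t^{-d}$, Karamata converts this to $N(\beta)\sim \frac{c_d|\Omega|}{\Gamma(d+1)}\beta^{d}$, and your duplication-formula computation of $(\Gamma(d+1)/c_d)^{1/d}$ does produce exactly $\tilde{C_d}=\sqrt{4\pi}\,\Gamma(1+d/2)^{1/d}$ (I checked the constants; they are consistent with the heuristic phase-space count $(2\pi)^{-d}\omega_d|\Omega|$). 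What this buys over the adaptation the paper gestures at: it needs no boundary regularity and no bracketing (which, as you note, is genuinely awkward for a nonlocal operator since there is no Neumann decoupling), at the price of yielding only the leading term --- which is all the statement asserts. Two small points you should make explicit if you write this up: justify that $e^{-tH_{0,\Omega}}$ is trace class and that the trace equals $\int_{\Omega}p_t^{\Omega}(x,x)\,dx$ (Hilbert--Schmidt bound $p_t^{\Omega}\le p_t$ plus the semigroup property suffices), and spell out the standard inversion from $N(\beta)\sim A\beta^{d}$ to $\beta_k\sim(k/A)^{1/d}$ via $N(\beta_k)\ge k>N(\beta_k-)$. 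Your fallback comparison $\beta_j\le\sqrt{\lambda_j}$ indeed gives only the upper bound on $\limsup_k k^{-1/d}\beta_k$, so it cannot replace the Tauberian argument.
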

This theorem can be proved by adapting a proof of the Weyl asymptotic formula for
the Laplacian.

The analogue of the Berezin-Lie-Yau inequality  shown in \cite{HarYil} reads:
\begin{theorem} (Analogue of the Berezin-Lie-Yau inequality) The eigenvalues
$\beta_k$ of $H_{0,\Omega}$ satisfy
\begin{equation}\label{BerLiYau}
\sum_{j=1}^k\beta_k
\ge \frac{d\tilde{C_d}}{d+1}|\Omega|^{-1/d}k^{1+1/d}.
\end{equation}
\end{theorem}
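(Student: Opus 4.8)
The plan is to adapt the classical Berezin--Li--Yau argument of \cite{LiYau}, with the symbol $|\xi|$ replacing $|\xi|^2$. Let $u_1,\dots,u_k$ be the first $k$ normalized eigenfunctions of $H_{0,\Omega}$, extended by zero to all of ${\mathbb R}^d$. Since $H_{0,\Omega}$ is the generator of the Cauchy process killed on $\partial\Omega$ (cf. \cite{BanKul}, \cite{BanKul2}), its form domain is $\{u\in H^{1/2}({\mathbb R}^d):u=0\text{ a.e. on }\Omega^c\}$, and these zero-extensions lie in it with
\[
\beta_j=\langle H_{0,\Omega}u_j,u_j\rangle=\int_{{\mathbb R}^d}|\xi|\,|\widehat{u}_j(\xi)|^2\,d\xi ,
\]
where $\widehat{\;}$ is the Fourier transform. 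Setting $F(\xi):=\sum_{j=1}^k|\widehat{u}_j(\xi)|^2$ and summing over $j$,
\[
\sum_{j=1}^k\beta_j=\int_{{\mathbb R}^d}|\xi|\,F(\xi)\,d\xi .
\]

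Next I would record the two elementary properties of $F$. By Plancherel's identity and orthonormality, $\int_{{\mathbb R}^d}F(\xi)\,d\xi=\sum_{j=1}^k\|u_j\|_{L^2}^2=k$. On the other hand, writing $\widehat{u}_j(\xi)=(2\pi)^{-d/2}\int_\Omega u_j(x)e^{-ix\cdot\xi}\,dx$ and applying Bessel's inequality to the orthonormal system $\{u_j\}_{j=1}^k$ in $L^2(\Omega)$ tested against $x\mapsto e^{-ix\cdot\xi}$, we obtain the pointwise bound
\[
0\le F(\xi)\le (2\pi)^{-d}|\Omega|\qquad\text{for all }\xi\in{\mathbb R}^d .
\]

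The core of the argument is then a rearrangement (bathtub) estimate. Among all measurable $F$ satisfying $0\le F\le A:=(2\pi)^{-d}|\Omega|$ and $\int_{{\mathbb R}^d}F=k$, the weighted integral $\int|\xi|F(\xi)\,d\xi$ is minimized — because $|\xi|$ is radially increasing — by the profile $F^\ast=A\,\mathbf{1}_{B_R}$, where $B_R$ is the centered ball with $|B_R|=k/A=(2\pi)^dk/|\Omega|$. Hence, with $\omega_d$ the volume of the unit ball,
\[
\sum_{j=1}^k\beta_j\ \ge\ A\int_{B_R}|\xi|\,d\xi\ =\ A\,\frac{d\,\omega_d}{d+1}\,R^{d+1}.
\]
Substituting $R=\big((2\pi)^dk/(\omega_d|\Omega|)\big)^{1/d}$, so $R^{d+1}=(2\pi)^{d+1}k^{1+1/d}\omega_d^{-1-1/d}|\Omega|^{-1-1/d}$, and using $\omega_d^{1/d}=\sqrt\pi\,\Gamma(1+d/2)^{-1/d}$, the constant collapses to $\tfrac{2\pi}{\omega_d^{1/d}}=\sqrt{4\pi}\,\Gamma(1+d/2)^{1/d}=\tilde{C_d}$, giving exactly $\frac{d\tilde{C_d}}{d+1}|\Omega|^{-1/d}k^{1+1/d}$.

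The only genuinely delicate point is the first step: confirming that the zero-extension of each eigenfunction belongs to $H^{1/2}({\mathbb R}^d)$ and that the quadratic form of $H_{0,\Omega}$ is evaluated by the Fourier multiplier $|\xi|$ on these extensions, rather than by the spectral square root of the Dirichlet Laplacian (which would be a different operator). This is precisely where the probabilistic description via the killed Cauchy process is needed. Once that identification is in place, the remaining ingredients — Plancherel, Bessel's inequality, and the bathtub principle — are routine, and the constant-chasing is the only computation to carry out in detail.
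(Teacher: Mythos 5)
Your argument is correct and is essentially the same as the proof indicated in the paper (following \cite{HarYil} and \cite{LiYau}): Fourier transform the eigenfunctions, use Plancherel and Bessel to get $\int F = k$ and $0\le F\le (2\pi)^{-d}|\Omega|$, and then minimize $\int|\xi|F(\xi)\,d\xi$ under these constraints — your bathtub step is exactly the generalized H\"ormander lemma the paper refers to, and your constant computation matches $\tilde{C_d}=\sqrt{4\pi}\,\Gamma(1+d/2)^{1/d}$.
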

As in the original Li-Yau paper \cite{LiYau}, the main tool used in the proof of
this theorem is a generalization of the lemma  which is attributed to H\"{o}rmander in \cite{LiYau}. This result is also sharp in the
sense of the Weyl asymptotic formula as in the case of the Laplacian.
\section{Statement and Proof of the Theorem}
The main result of this paper is given below:
\begin{theorem}\label{corrected} For $k\geq 1$ and the bounded set $\Omega$,
\begin{equation}\label{thmresult}
\sum_{j=1}^k\beta_j
\geq \frac{d\tilde{C_d}}{d+1}|\Omega|^{-1/d}k^{1+1/d}
+\tilde{M_d} \frac{|\Omega|^{1+1/d}}{I(\Omega)} k^{1-1/d},
\end{equation}
where $\tilde{C_d}=\sqrt{4\pi}\,\Gamma(1+d/2)^{1/d}$ and the constant $\displaystyle{\tilde{M_d}}$ depends
only on the dimension $d$.
\end{theorem}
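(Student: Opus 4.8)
The plan is to adapt Melas's strategy \cite{Melas} to the symbol $|\xi|$ in place of $|\xi|^2$. Fix $k\ge 1$ and let $u_1,\dots,u_k$ be the $L^2(\Omega)$-normalized eigenfunctions of $H_{0,\Omega}$ corresponding to $\beta_1,\dots,\beta_k$, extended by $0$ to all of ${\mathbb R}^d$. After a translation (which changes neither the spectrum of $H_{0,\Omega}$ nor $I(\Omega)$) we may assume the centroid of $\Omega$ is the origin, so that $\int_\Omega|x|^2\,dx=I(\Omega)$. Set
\[
F(\xi)=\sum_{j=1}^{k}|\widehat{u_j}(\xi)|^2 ,
\]
where $\widehat{\ }$ is the Fourier transform normalized so that the symbol of $H_{0,\Omega}$ is $|\xi|$. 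The first step is to record three properties of $F$: (i) $\int_{{\mathbb R}^d}F=k$, by Plancherel and normalization; (ii) $0\le F(\xi)\le(2\pi)^{-d}|\Omega|=:M$ for every $\xi$, from Bessel's inequality applied to the orthonormal family $\{u_j\}$ tested against $e^{ix\cdot\xi}\in L^2(\Omega)$; and (iii) the gradient bound $|\nabla F(\xi)|\le 2(2\pi)^{-d/2}I(\Omega)^{1/2}F(\xi)^{1/2}$, obtained by writing $\partial_{\xi_\ell}\widehat{u_j}=\widehat{(-ix_\ell u_j)}$, applying Cauchy--Schwarz in $j$, and using Bessel once more (against $x_\ell e^{ix\cdot\xi}$) together with $\int_\Omega|x|^2\,dx=I(\Omega)$. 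Equivalently, $\sqrt F$ is Lipschitz with constant $L:=(2\pi)^{-d/2}I(\Omega)^{1/2}$. Diagonalizing $H_{0,\Omega}$ gives $\sum_{j=1}^k\beta_j=\int_{{\mathbb R}^d}|\xi|\,F(\xi)\,d\xi$, so \eqref{thmresult} reduces to a lower bound for $\int|\xi|F$ subject to (i)--(iii).

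Second, I would pass to a radial profile. Symmetric-decreasing rearrangement does not increase $\int|\xi|F$ and, being monotone, commutes with $\sqrt{\cdot}$; granting that it does not increase the Lipschitz constant of $\sqrt F$ (a point needing care, e.g.\ via P\'olya--Szeg\H{o}), we are reduced to $F(\xi)=\phi(|\xi|)^2$ with $\phi:[0,\infty)\to[0,\sqrt M]$ nonincreasing and $L$-Lipschitz. Writing $\omega_d=|S^{d-1}|$, the task becomes: minimize $\omega_d\int_0^\infty r^{d}\phi(r)^2\,dr$ subject to $\omega_d\int_0^\infty r^{d-1}\phi(r)^2\,dr=k$. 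The third step is to solve this one-dimensional constrained minimization. A Lagrange/exchange argument should show that the minimizer is the ``box-then-ramp'' profile $\phi(r)=\sqrt M$ on $[0,a]$, $\phi(r)=\sqrt M-L(r-a)$ on $[a,a+b]$ with $b=\sqrt M/L=(|\Omega|/I(\Omega))^{1/2}$, and $\phi\equiv 0$ afterwards (for small $k$ the box degenerates and one uses a truncated ramp $\phi(r)=(m-Lr)_+$), with $a$ fixed by the mass constraint.

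Third, I would substitute this profile, compute both integrals as functions of $a$, eliminate $a$ via the constraint, and expand in the small parameter $b$ relative to the ``ball radius'' $R:=(dk/(\omega_d M))^{1/d}\sim k^{1/d}|\Omega|^{-1/d}$. The $b$-independent term reproduces the main term $\frac{d\tilde{C_d}}{d+1}|\Omega|^{-1/d}k^{1+1/d}$. The crucial arithmetic point --- and the place I expect the real work --- is that the $O(b)$ correction cancels identically; the surviving term is $O(b^2)$, and since $b^2=|\Omega|/I(\Omega)$ while the geometry contributes a factor $R^{-1}\sim k^{-1/d}|\Omega|^{1/d}$, this term has precisely the form $\tilde{M_d}\,|\Omega|^{1+1/d}I(\Omega)^{-1}k^{1-1/d}$ with $\tilde{M_d}$ dimensional. (This cancellation explains why the improvement sits at order $k^{1-1/d}$, one power of $k^{1/d}$ below the naive guess, and why the dependence on $I(\Omega)$ is linear rather than through $I(\Omega)^{1/2}$.) Finally one checks \eqref{thmresult} for the remaining small values of $k$ directly, using the truncated-ramp case and, if need be, absorbing those cases into the constant $\tilde{M_d}$.

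The principal obstacles are thus: (a) justifying that the Lipschitz bound on $\sqrt F$ survives symmetric-decreasing rearrangement (alternatively, running the whole argument directly for the non-radial $F$ by bathtub/layer-cake estimates and avoiding rearrangement altogether); (b) proving that the box-then-ramp profile is genuinely the minimizer of the one-dimensional problem, not merely a critical point; and (c) the bookkeeping in the second-order expansion, where the leading correction vanishes and one must control the next order uniformly in $k$.
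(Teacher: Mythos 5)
Your outline is viable and genuinely different from the paper's argument, so it is worth comparing the two. The paper also works with $F(\xi)=\sum_{j\le k}|\hat u_j(\xi)|^2$ and its decreasing radial rearrangement $F^*(\xi)=\varphi(|\xi|)$, but it keeps only the cruder information that $F$ itself is Lipschitz, $|\nabla F|\le 2(2\pi)^{-d}\sqrt{|\Omega|I(\Omega)}=:m$ (Cauchy--Schwarz plus $F\le(2\pi)^{-d}|\Omega|$), converts this via the coarea formula into $0\le -\varphi'\le m$, and then, instead of solving any extremal problem, invokes a calculus lemma: for decreasing $\varphi$ with $|\varphi'|\le m$ one has $\int_0^\infty x^d\varphi \ge \frac{1}{d+1}\bigl(d\int_0^\infty x^{d-1}\varphi\bigr)^{1+1/d}\varphi(0)^{-1/d}+\frac{\varphi(0)^{2+1/d}}{6m^2(d^2-1)}\bigl(d\int_0^\infty x^{d-1}\varphi\bigr)^{1-1/d}$, proved by integrating the pointwise inequality $(d-1)x^{d+1}-(d+1)y^2x^{d-1}+2y^{d+1}\ge 2y^{d-1}(x-y)^2$ over a unit interval chosen by an intermediate-value argument; a final monotonicity step replaces $\varphi(0)$ by $(2\pi)^{-d}|\Omega|$. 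You instead retain the sharper fact that $\sqrt F$ is Lipschitz with constant $(2\pi)^{-d/2}I(\Omega)^{1/2}$ and propose to solve the one-dimensional problem exactly. The three obstacles you flag are real but all closable: (a) you can bypass P\'olya--Szeg\H{o} by running the paper's coarea/level-set computation with the bound $|\nabla F|\le 2L\sqrt F$, which gives $-\varphi'(x)\le 2L\sqrt{\varphi(x)}$ for the radial profile, i.e.\ exactly the Lipschitz bound on $\sqrt\varphi$; also note that only the inequality $\int|\xi|F\,d\xi\ge\int|\xi|F^*\,d\xi$ is needed (the paper even writes it as an equality); (b) minimality of the box-then-ramp (and, at small mass, the truncated ramp) follows from a single-crossing/layer-cake argument, since a profile decreasing at the maximal admissible rate is crossed at most once, from below, by any competitor of equal mass; (c) the degenerate regime where the box disappears occurs only for $k$ below a constant depending on $d$, because $I(\Omega)\ge\frac{d}{d+2}\omega_d^{-2/d}|\Omega|^{(d+2)/d}$ bounds the critical mass by a dimensional constant, but for those finitely many $k$ you must still verify the full two-term bound (the correction there is comparable to the main term), not merely wave at it. In exchange for this extra bookkeeping (exact minimizer, the second-order transport-type expansion in which the $O(b)$ term indeed cancels at fixed mass, and the small-$k$ check), your route would give the same form of result as Theorem \ref{corrected} with an in-principle better constant $\tilde M_d$, whereas the paper's lemma trades sharpness of the constant for a short, expansion-free proof.
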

Observe that, in \eqref{melas}, the power of $k$ in the first term is $1+2/d$ while in \eqref{thmresult} the corresponding power is $1+1/d$. This is not surprising because the Klein-Gordon operator can be viewed as the square root of Laplacian in ${\mathbb R}^d$. Also, the improvement in \eqref{melas} consists of ${|\Omega|}/{I(\Omega)}$ and in \eqref{thmresult} we have ${|\Omega|^{1+1/d}}/{I(\Omega)}$. Moreover, the difference between the powers of the k terms on the right hand side of \eqref{thmresult} is $2/d$ as in \eqref{melas}.

First, we will state and prove the following lemma, which is the crucial step in proving the theorem.
\begin{lemma}\label{lemma}
Let $d\geq 2$ and $\varphi:[0,\infty)\to [0,\infty)$ be a decreasing, absolutely
continuous function. Assume that
\begin{equation}\label{mp0}0\leq-\varphi'(x)\leq m,\;\;x>0.\end{equation}
Then,
\begin{eqnarray}\int_0^{\infty}x^d\varphi(x)dx&\geq& \frac{1}{d+1}\left(d\int_{0}^{\infty}x^{d-1}\varphi(x)dx\right)^{1+1/d}\varphi(0)^{-1/d}\nonumber\\
&+&\frac{\varphi(0)^{2+1/d}}{6m^2(d^2-1)}
\left(d\int_{0}^{\infty}x^{d-1}\varphi(x)dx\right)^{1-1/d}.\label{xnp}
\end{eqnarray}
\end{lemma}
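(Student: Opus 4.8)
The plan is to run Melas's argument \cite{Melas} with the weight $x^d$ in place of his $x^{d+1}$, but to streamline the use of the derivative bound \eqref{mp0} by first replacing $\varphi$ with an explicit extremal profile. \emph{Normalization.} Both sides of \eqref{xnp} scale correctly under $\varphi\mapsto c\varphi$ (which turns $m$ into $cm$), so I may assume $\varphi(0)=1$, whence $0\le\varphi\le 1$ and $0\le-\varphi'\le m$; the case $\varphi\equiv 0$ being trivial, set
\[
R:=\Bigl(d\int_0^\infty x^{d-1}\varphi(x)\,dx\Bigr)^{1/d},
\]
so that $\int_0^R x^{d-1}\,dx=\int_0^\infty x^{d-1}\varphi\,dx$. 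The claim \eqref{xnp} becomes $\int_0^\infty x^d\varphi(x)\,dx\ge \frac{R^{d+1}}{d+1}+\frac{R^{d-1}}{6m^2(d^2-1)}$, and (by monotonicity and convergence of the integral) $\varphi(x)\to 0$ as $x\to\infty$.

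\emph{Reduction to a trapezoid.} Let $\phi_*$ be the function of height $1$ that equals $1$ on $[0,a_*]$, decreases linearly with slope $-m$ on $[a_*,a_*+1/m]$, and is $0$ afterwards, where $a_*\ge 0$ is chosen so that $\int_0^\infty x^{d-1}\phi_*\,dx=\int_0^\infty x^{d-1}\varphi\,dx$; such an $a_*$ exists because integrating $\varphi'\ge -m$ gives $\varphi(x)\ge(1-mx)_+$, which is exactly $\phi_*$ for $a_*=0$, so the mass of $\varphi$ is at least that of the pure triangle. I would then show $\int_0^\infty x^d\varphi\,dx\ge\int_0^\infty x^d\phi_*\,dx$: writing $g=\varphi-\phi_*$, one has $\int_0^\infty x^{d-1}g\,dx=0$, $g(0)=0$, $g(x)\to 0$ as $x\to\infty$, and $g'$ is $\le 0$ on $[0,a_*]$, $\ge 0$ on $[a_*,a_*+1/m]$ and $\le 0$ on $[a_*+1/m,\infty)$; a brief monotonicity discussion then forces $g\le 0$ on $[0,\tau]$ and $g\ge 0$ on $[\tau,\infty)$ for a single crossing point $\tau$. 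Consequently
\[
\int_0^\infty x^d g\,dx=\int_0^\infty x^{d-1}(x-\tau)g\,dx+\tau\int_0^\infty x^{d-1}g\,dx=\int_0^\infty x^{d-1}(x-\tau)g\,dx\ \ge\ 0,
\]
since $(x-\tau)g(x)\ge 0$ pointwise. (The same one-line computation with $\phi_*$ and $\tau$ replaced by $\chi_{[0,R]}$ and $R$ gives $\int x^d\varphi\ge R^{d+1}/(d+1)$, i.e.\ the Berezin--Li--Yau main term; the extra term is what the bound \eqref{mp0} buys us through the passage to $\phi_*$.)

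\emph{The explicit estimate and the main difficulty.} What remains is the inequality for $\phi_*$ alone, now a one-parameter problem in $a_*\ge 0$ (equivalently in $R$): by elementary integration one writes $\int_0^\infty x^{d-1}\phi_*\,dx$ and $\int_0^\infty x^d\phi_*\,dx$ as polynomials in $a_*$ and $1/m$, and the quantity to be bounded below is
\[
\int_0^\infty x^d\phi_*\,dx-\frac{R^{d+1}}{d+1}=\int_0^\infty x^{d-1}(x-R)\bigl(\phi_*-\chi_{[0,R]}\bigr)\,dx,
\]
whose integrand is supported on the transition interval of length $1/m$ and, expanded there, is of order $R^{d-1}/m^2$. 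The \textbf{main obstacle} is carrying this out uniformly in $a_*$: the binding case is $a_*=0$ (the pure triangle $\varphi(x)=(1-mx)_+$), where the transition length $1/m$ is comparable to $R$ rather than negligible, so one cannot stop at leading order and must retain the binomial correction terms in order to read off the sharp dimensional constant in \eqref{xnp} (equivalently, to determine the best $\tilde M_d$ in \eqref{thmresult}). This recomputation, with the weight $x^d$ replacing Melas's $x^{d+1}$, is precisely the point at which his argument has to be redone rather than quoted.
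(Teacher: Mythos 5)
Your reduction step is correct, and it is a genuinely different route from the paper's (the paper normalizes, passes to $f=-\eta'$, selects a unit interval $[\alpha,\alpha+1]$ via \eqref{alphaA}--\eqref{alphaB}, and integrates the pointwise polynomial inequality \eqref{polyxy}); your single-crossing argument for $g=\varphi-\phi_*$ and the identity $\int_0^\infty x^dg\,dx=\int_0^\infty x^{d-1}(x-\tau)g\,dx\ge 0$ are sound. The difficulty is that you stop exactly where the content of the lemma begins: the whole correction term, including the constant $\frac{1}{6m^2(d^2-1)}$, has to be extracted from the one-parameter inequality for the trapezoid $\phi_*$, and you do not prove it --- you only name it ``the main obstacle'' and assert that a Melas-type recomputation will yield it. As written, the proposal establishes only the first term of \eqref{xnp} (the known Berezin--Li--Yau bound), so there is a genuine gap.

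Moreover, the deferred step would fail as stated, at least for $d=2$: your own binding case, the pure triangle $\varphi(x)=(1-mx)_+$, violates \eqref{xnp}. For $d=2$, $m=1$ one has $\int_0^\infty x\varphi\,dx=\tfrac16$ and $\int_0^\infty x^2\varphi\,dx=\tfrac1{12}\approx 0.083$, while the right-hand side of \eqref{xnp} is $\tfrac13\bigl(\tfrac13\bigr)^{3/2}+\tfrac1{18}\bigl(\tfrac13\bigr)^{1/2}=\tfrac{1}{6\sqrt3}\approx 0.096$. Expanding for a long plateau $a_*$, the deficit $\int x^d\phi_*\,dx-\mbox{RHS}$ has leading coefficient $\frac{d^2-5}{24(d^2-1)}a_*^{d-1}$ (for $m=1$), which is negative precisely when $d=2$; numerically the whole trapezoid family fails in $d=2$. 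So for $d=2$ your program can only produce \eqref{xnp} with a smaller constant in the second term, and for $d\ge3$ (where the trapezoids are consistent with the bound) the uniform-in-$a_*$ verification still has to be done, with the binding case $a_*=0$ treated exactly as you anticipate. Note also that the same triangle example violates \eqref{alphaB} for the $\alpha$ fixed by \eqref{alphaA}, so this is a point where the lemma itself, not merely your route to it, requires a corrected constant or additional hypotheses rather than a routine computation.
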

\begin{proof}
Let us first define \begin{equation}\label{trk}\eta(x)=\frac{1}{\varphi(0)}\,\varphi\left(\frac{\varphi(0)}{m}\,x\right).\end{equation}
Then $\eta(0)=1$ and $0\leq-\eta'(x)\leq 1$. To ease the notation, define $f(x):=-\eta'(x)$ for $x\geq 0$.
Hence, $0<f(x)<1$ for $x>0$ and  $\displaystyle{\int_{0}^{\infty}}f(x)dx=\eta(0)=1$. Now,
define
\begin{equation}\label{xn-1p}A:=\int_0^{\infty}x^{d-1}\eta(x)dx
\qquad\mbox{and}\qquad B:=\int_{0}^{\infty}x^d\eta(x)dx.\end{equation}
Assume that $B<+\infty$, as otherwise the result is immediate. Thus, we can find
a sequence $\{R_j\}$ such that $R_j\to\infty$
and $\displaystyle{R_j^{d+1}\eta(R_j)\to 0}$ as $j\to \infty$.
Then, using integration by parts we get
$$\int_0^{\infty}x^{d}f(x)dx=Ad,\qquad\mbox{and}\qquad\int_0^{\infty}x^{d+1}f(x)dx\leq(d+1)B.$$
By the initial value theorem, there exist an $\alpha\geq 0$ such that
\begin{equation}\label{alphaA}
\int_{\alpha}^{\alpha+1}x^{d-1} dx=(Ad)^{1-1/d}\end{equation}
and
\begin{equation}\label{alphaB}
\int_{\alpha}^{\alpha+1}x^{d+1}dx\leq
\int_{0}^{\infty}x^{d+1}f(x)dx\leq (d+1)B.\end{equation}
As we shall see later, the key point in the proof of the lemma is the inequality \begin{equation}\label{polyxy}(d-1)x^{d+1}-(d+1)y^2x^{d-1}+2y^{d+1}
\geq 2y^{d-1}(x-y)^2\end{equation}
for $y>0$ and $x\geq 0$.
The proof of \eqref{polyxy} is straightforward. Indeed, first divide both sides by $y^{d+1}$. Then, by setting $\tau=\dfrac{x}{y}$ we get the polynomial
$$g(\tau):=(d-1)\tau^{d+1}-(d+1)\tau^{d-1}-2\tau^2+4\tau
=(\tau-1)^2\tau\left(\sum_{k=0}^{d-3}(2k+4)\tau^k+(d-1)\tau^{d-2}\right).$$
An induction on $d$ leads to $g(\tau)\geq 0$. Now, integrating \eqref{polyxy}
from $\alpha$ to $\alpha+1$ and using \eqref{alphaA} and \eqref{alphaB} we get
\begin{eqnarray*}
(d+1)(d-1)B-(d+1)y^2(Ad)^{1-1/d}+2y^{d+1}
&\geq& 2y^{d-1}\int_{\alpha}^{\alpha+1}(x-y)^2dx\\
&\geq& 2y^{d-1}\int_{-1/2}^{1/2}s^2ds\\
&=& \frac{y^{d-1}}{6}.
\end{eqnarray*}
Choosing $y=(Ad)^{1/d}$ yields
$$B\geq \frac{1}{d+1}(Ad)^{1+1/d}+\frac{1}{6(d^2-1)}(Ad)^{1-1/d},$$
or, equivalently,
\begin{eqnarray*}\int_{0}^{\infty}x^d\eta(x)dx &\geq& \frac{1}{d+1}\left(d\int_0^{\infty}x^{d-1}\eta(x)dx\right)^{1+1/d}
+\frac{1}{6(d^2-1)}\left(d\int_0^{\infty}x^{d-1}\eta(x)dx\right)^{1-1/d},\end{eqnarray*}
which together with \eqref{trk} gives
\begin{eqnarray}\int_0^{\infty}x^d\varphi(x)dx &\geq& \frac{1}{d+1}\left(d\int_{0}^{\infty}x^{d-1}\varphi(x)dx\right)^{1+1/d}\varphi(0)^{-1/d}\nonumber\\
&+&\frac{\varphi(0)^{2+1/d}}{6m^2(d^2-1)}
\left(d\int_{0}^{\infty}x^{d-1}\varphi(x)dx\right)^{1-1/d},\label{xnpresult}
\end{eqnarray}
concluding the proof.\end{proof}
Let us now prove the theorem by using the lemma.

\begin{proofof}{Theorem \ref{corrected}} Let the Fourier transform of
each eigenfunction $u_j$ corresponding to the $j$th eigenvalue $\beta_j$ be denoted by
$$\hat{u}_j(\xi)=\frac{1}{(2\pi)^{d/2}}\int_{\Omega}e^{-ix\cdot\xi}u_j(x)dx.$$
Since the set of eigenfunctions $\{u_j\}_{j=1}^{\infty}$ forms an orthonormal
set, the set of $\{\hat{u}_j(\xi)\}_{j=1}^{\infty}$ is also an orthonormal set
in ${\mathbb R}^d$ by using the Plancherel's theorem.
Set
$$F(\xi):=\sum_{j=1}^k|\hat{u}_j(\xi)|^2.$$
Now we will use the decreasing radial rearrangement of $F(\xi)$ and the coarea formula
to get the condition in the lemma.
Let $F^*(\xi)=\varphi(|\xi|)$ be the decreasing radial rearrangement of
$F$. We may assume that $\varphi$ is absolutely continuous.
Let
$\mu(t)=|\{F^*(\xi)>t\}|=|\{F(\xi)>t\}|.$ Then, $\mu(\varphi(x))=\omega_dx^d$. By the coarea formula,
$$\mu(t)=\int_{t}^{|\Omega|/(2\pi)^{d}}\int_{\{F=x\}}|\nabla F|^{-1}d\sigma_xdx.$$
Then,
\begin{equation}\label{muvar}
-\mu'(\varphi(x))=\int_{\{F=\varphi(x)\}}|\nabla F|^{-1}d\sigma_{\varphi(x)}.
\end{equation}
Next we will estimate $|\nabla F|$:
$$\sum_{j=1}^{k}|\nabla \hat{u}_j(\xi)|^2\leq\frac{1}{(2\pi)^{d}}\int_{\Omega}|ixe^{-ix\cdot\xi}|^2dx
=\frac{I(\Omega)}{(2\pi)^{d}},$$
where $I(\Omega)$, the moment
of inertia, is defined as follows:
$$I(\Omega)=\min_{u\in{\mathbb R}^d}\int_{\Omega} |x-u|^2dx.$$
After translation, we may assume that
$$I(\Omega)=\int_{\Omega} |x|^2dx.$$
Observe that for every $\xi$,
\begin{equation}\label{thm:gradFbound}|\nabla F(\xi)|\leq 2\left(\sum_{j=1}^{k}|\hat{u}_j(\xi)|^2\right)^{1/2}
\left(\sum_{j=1}^{k}|\nabla \hat{u}_j(\xi)|^2\right)^{1/2}\leq 2(2\pi)^{-d}\sqrt{|\Omega|I(\Omega)}.\end{equation}
By letting $m:=2(2\pi)^{-d}\sqrt{|\Omega|I(\Omega)}$ and using \eqref{thm:gradFbound} in
\eqref{muvar}, we obtain
\begin{eqnarray*}
-\mu'(\varphi(x))&\geq& m^{-1}{\rm Vol}_{n-1}(\{F=\varphi(x)\})\\
&\geq& m^{-1}d\omega_d x^{d-1}.
\end{eqnarray*}
On the other hand, differentiating $\mu(\varphi(x))$ yields
$\mu'(\varphi(x))\varphi'(x)=d\omega_dx^{d-1}$. Thus,
\begin{equation}\label{condinlemma}
0\leq-\varphi'(x)\leq m,\end{equation}
which is the required condition in the lemma.
Thus, it remains to prove the theorem by using the lemma. Observe that
\begin{equation}\label{Fintk}
\int_{{\mathbb R}^d}F(\xi)d\xi=k.
\end{equation}
Observe that because the $u_j$'s form an orthonormal set in $L^2(\Omega)$, by Bessel's inequality
\begin{equation}
0\leq F(\xi)\leq\frac{|\Omega|}{(2\pi)^{d}}.\label{fksiless}
\end{equation}
Since
$$\beta_j=\langle u_j,H_{0,\Omega}u_j\rangle=\int_{{\mathbb R}^d}|\xi||\hat{u}_j(\xi)|^2d\xi,$$
with the definition of $F$, we have
\begin{equation}\label{Fksiintk}
\int_{{\mathbb R}^d}|\xi|F(\xi)d\xi=\sum_{j=1}^k\beta_j.
\end{equation}
Hence,
\begin{equation}\label{eqnkA}
k=\int_{{\mathbb R}^d} F(\xi)d\xi=\int_{{\mathbb R}^d} F^*(\xi)d\xi=d\omega_d\int_{0}^{\infty}x^{d-1}\varphi(x)dx,\end{equation}
and
\begin{equation}\label{sumxiB}\sum_{j=1}^k\beta_j=\int_{{\mathbb R}^d} |\xi|F(\xi)d\xi=\int_{{\mathbb R}^d} |\xi|F^*(\xi)d\xi=d\omega_d\int_{0}^{\infty}x^{d}\varphi(x)dx,\end{equation}
where $\omega_d$ denotes the volume of the d-dimensional unit ball. The equations \eqref{eqnkA}, \eqref{sumxiB}, when combined with Lemma \ref{lemma} yield
\begin{equation}\label{inwithvarphi}
\sum_{j=1}^{k}\beta_j\geq \frac{d}{d+1}{\omega_d}^{-1/d}
\varphi(0)^{-1/d}k^{1+1/d}
+\frac{d}{6m^2(d^2-1)}\omega_d^{1/d}\varphi(0)^{2+1/d}k^{1-1/d}.
\end{equation}
Define
$$h(t)=\frac{d}{d+1}{\omega_d}^{-1/d}k^{1+1/d}
t^{-1/d}
+\frac{Cd}{m^2(d^2-1)}\omega_d^{1/d}k^{1-1/d}t^{2+1/d},$$
where $C$ is a constant to be chosen later. Observe that the function $h$ is decreasing on
$$0<t\leq \left(\frac{m^2(d-1)k^{2/d}}{C(2d+1)\omega_d^{2/d}}\right)^{d/(d+2)}.$$
Let $R$ be the number such that $|\Omega|=\omega_dR^d$. Then,
$$I(\Omega)\geq \int_{B(R)}|x|^2dx=\frac{d\omega_dR^{d+2}}{d+2}
,$$
where $B(R)$ is the ball of radius $R$.
Then,
$$m=2(2\pi)^{-d}\sqrt{|\Omega|I(\Omega)}
\geq2(2\pi)^{-d}\sqrt{\frac{d}{d+2}\omega_d^{-2/d}|\Omega|^{(2d+2)/d}}
\geq (2\pi)^{-d}\omega_d^{-1/d}|\Omega|^{(d+1)/d}.
$$
Choosing $C=\min\left\{\dfrac{1}{6},\dfrac{m^2(d-1)k^{2/d}(2\pi)^{d+2}}{(2d+1)\omega_d^{2/d}|\Omega|^{1+2/d}}
\right\}$
will guarantee that
$$\left(\frac{m^2(d-1)k^{2/d}}{C(2d+1)\omega_d^{2/d}}\right)^{d/(d+2)}\geq (2\pi)^{-d}|\Omega|.$$
Hence,
the function $h$ is decreasing on $\big(0,(2\pi)^{-d}|\Omega|\big]$. Since
$0<\varphi(0)\leq (2\pi)^{-d}|\Omega|,$ and $h$ is decreasing, we can replace
$\varphi(0)$ in \eqref{inwithvarphi} with $(2\pi)^{-d}|\Omega|$. Therefore,
\eqref{inwithvarphi} and the fact that
$\omega_d=\dfrac{\pi^{d/2}}{\Gamma\left(1+d/2\right)}$ result in the following inequality:
\begin{equation}\label{impbound}
\sum_{j=1}^k\beta_j
\geq \frac{\sqrt{4\pi}d}{d+1}
\left(\dfrac{\Gamma\left(1+d/2\right)}{|\Omega|}\right)^{1/d}k^{1+1/d}
+\frac{Cd}
{8\sqrt{\pi}(d^2-1)(\Gamma(1+d/2))^{1/d}}\frac{|\Omega|^{1+1/d}}{I(\Omega)}k^{1-1/d}.
\end{equation}
Let $\tilde{M_d}:=\dfrac{Cd}
{8\sqrt{\pi}(d^2-1)(\Gamma(1+d/2))^{1/d}}$. Then \eqref{impbound} can be written as
\begin{equation}\label{result}
\sum_{j=1}^k\beta_j
\geq \frac{d\tilde{C_d}}{d+1}|\Omega|^{-1/d}k^{1+1/d}
+\tilde{M_d} \frac{|\Omega|^{1+1/d}}{I(\Omega)} k^{1-1/d},
\end{equation}
where $\tilde{C_d}=\sqrt{4\pi}\Gamma(1+d/2)^{1/d}$. Recall that the first term on the right of \eqref{result} is same bound as in \cite{HarYil}.
\end{proofof}

\subsection*{Acknowledgements}
The author wishes to thank Evans Harrell, Lotfi Hermi, and T{\"u}rkay Yolcu for suggestions, comments and helpful discussions.

\end{document}